\newtheorem{thm}{Theorem}
\newtheorem{cor}[thm]{Corollary}
\theoremstyle{definition}
\theoremstyle{remark}
\numberwithin{subcase}{case}
\begin{document}
\title{A combinatorial proof on partition function parity}
\author{Daniel C. McDonald}
\address{Department of Mathematics, University of Illinois, 
Urbana, IL, USA}
\email{dmcdona4@illinois.edu}
\date{}
\maketitle 
A \emph{partition} $\lambda$ of a positive integer $n$ is a nonincreasing list of positive integer \emph{parts} $\lambda_1,\ldots ,\lambda_k$ that sum to $n$.  The \emph{partition function} $p(n)$ counts the partitions of $n$.  

The number-theoretic properties of $p(n)$ have been studied extensively.  For example, Kolberg \cite{K} proved in 1959 and Newman \cite{N} proved independently in 1962 that $p(n)$ takes each value of parity infinitely often, with Fabrykowski and Subbarao \cite{FS} and Robbins \cite{R} giving new proofs of this result in 1990 and 2004, respectively.  Subbarao \cite{S} strengthened the result in 1966 by proving that $p(2n+1)$ takes each value of parity infinitely often, though he was unable to prove the analogous result for $p(2n)$; this was later proved by Kolberg in private correspondence to Subbarao.  Subbarao conjectured that $p(tn+r)$ takes each value of parity infinitely often for every pair $r$ and $t$ of integers satisfying $0\leq r<t$.  Over the years several authors confirmed this conjecture for various values of $t$, including the case $t=16$ by Hirschhorn and Subbarao \cite{HS} in 1988.  In 1995, Ono \cite{O} proved that $p(tn+r)$ is either always even or takes each value of parity infinitely often.  All of these proofs rely to some extent on manipulating generating functions.

We give a new self-contained proof that both $p(2n)$ and $p(2n+1)$ take each value of parity infinitely often.  We show these results follow from a more general theorem concerning the enumeration of certain partitions of integers along arithmetic progressions, whose proof relies on a series of bijections rather than generating functions.  For positive integers $a$ and $b$, let $D_{a,b}(n)$ be the set of partitions of $n$ into distinct parts each congruent to $b$ modulo $a$.
\begin{thm}\label{main}

Let $a,b,c,d$ satisfy $a\geq b\geq 1$, $c\geq 0$, and $d\geq 2$.  For $A=\{n:n\equiv bc\mod a\}$, there exist integers $r$ and $s$ satisfying $0\leq r<s<d$ such that $|D_{a,b}(n)|\equiv r\mod d$ for infinitely many $n\in A$ and $|D_{a,b}(n)|\equiv s\mod d$ for infinitely many $n\in A$.
\end{thm}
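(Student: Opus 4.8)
The plan is to recast Theorem~\ref{main} as a non-degeneracy statement and argue by contradiction. Since there are only $d$ residues modulo $d$ but $A$ is infinite, at least one residue is attained by $|D_{a,b}(n)|$ for infinitely many $n\in A$; if only one residue occurred infinitely often the sequence $\bigl(|D_{a,b}(n)|\bmod d\bigr)_{n\in A}$ would be eventually constant. Thus the conclusion is \emph{equivalent} to the assertion that this sequence is not eventually constant, and I would assume toward a contradiction that there are an integer $r$ and a bound $N_{0}$ with $|D_{a,b}(n)|\equiv r\pmod d$ for all $n\in A$ with $n\ge N_{0}$, aiming ultimately to force $1\equiv 0\pmod d$.

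The engine is a bijective, pentagonal-number-theorem-type identity adapted to parts in the progression $b,\,a+b,\,2a+b,\dots$. Setting
\[
F(q)=\sum_{n}|D_{a,b}(n)|\,q^{n}=\prod_{j\ge 0}\bigl(1+q^{ja+b}\bigr),\qquad \bar F(q)=\prod_{j\ge 0}\bigl(1-q^{ja+b}\bigr),
\]
the factorwise relation $(1+x)(1-x)=1-x^{2}$, realized by a sign-reversing involution that toggles the smallest part lying in exactly one of a pair of distinct-part partitions, yields
\[
F(q)\,\bar F(q)=\prod_{j\ge 0}\bigl(1-q^{2(ja+b)}\bigr)=:\bar F_{2}(q).
\]
A Franklin-type involution on distinct parts $\equiv b\pmod a$ (peeling the smallest part against the maximal descending run of largest parts with common difference $a$) then shows that $\bar F(q)=\sum_{m}e(m)q^{m}$ has all coefficients in $\{-1,0,1\}$, with $e(0)=1$, and is \emph{lacunary}: $e(m)\ne 0$ only on a sparse set of generalized pentagonal values $m=kb+a\binom{k}{2}$ (together with a companion family) whose consecutive gaps tend to infinity; the same holds for $\bar F_{2}$, supported on the doubled set. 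Reading off the coefficient of $q^{n}$ gives the exact signed recurrence $\sum_{m\ge 0}|D_{a,b}(n-m)|\,e(m)=e_{2}(n)$.

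To exploit this I would clear denominators: eventual constancy $\equiv r$ on $A$ means $(1-q^{a})F(q)\equiv P(q)\pmod d$ for a polynomial $P$, so from $F\bar F=\bar F_{2}$ one gets $P(q)\,\bar F(q)\equiv (1-q^{a})\,\bar F_{2}(q)\pmod d$. Now compare supports. Writing $P(q)=\sum_{\ell\le L}a_{\ell}q^{\ell}$, the coefficient of $q^{n}$ on the left is $\sum_{\ell}a_{\ell}e(n-\ell)$, supported within distance $L$ of the pentagonal set, whereas the right-hand side is supported within distance $1$ of the \emph{doubled} pentagonal set. The key point is that the pentagonal-type support has strictly larger asymptotic density than its dilation by $2$ (asymptotically $\sqrt 2$ as many), so for each fixed offset $\ell$ there are infinitely many large pentagonal points $m_{i}$ with $m_{i}+\ell$ off the doubled set; evaluating at $n=m_{i}+\ell$, where lacunarity leaves a single term, forces $a_{\ell}\equiv 0\pmod d$. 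Hence $P\equiv 0$, so $(1-q^{a})\bar F_{2}\equiv 0\pmod d$, i.e.\ $\bar F_{2}$ is constant modulo $d$ along $A$; but its coefficients are $\pm1$ and $0$, giving $1\equiv 0\pmod d$. The composite modulus $d$ causes no difficulty precisely because every relation used is an \emph{exact} signed identity with $\pm1$ right-hand side, so division is never needed.

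The main obstacle I anticipate is the residue-class bookkeeping. The hypothesis controls $|D_{a,b}(n)|$ only on the single class $A=\{n\equiv bc\bmod a\}$, whereas the fixed points of the Franklin-type involution have varying numbers of parts, so the shifts $n\mapsto n-m$ in the recurrence move the argument among several classes modulo $a$ on which $|D_{a,b}|$ is a priori uncontrolled. Bridging this gap is the crux: I expect it to require a roots-of-unity twist of the identity $F\bar F=\bar F_{2}$ that projects onto $A$ (or, equivalently, running the contradiction jointly across all classes), together with a check that $A$ genuinely meets the doubled pentagonal support so that the surviving $\pm1$ coefficient is not lost under the restriction. Verifying that the density comparison and the single-term lacunary evaluations go through after this twisting is where the real work will lie.
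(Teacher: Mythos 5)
Your opening reduction (the conclusion is equivalent to the sequence $|D_{a,b}(n)|\bmod d$, $n\in A$, not being eventually constant) is correct, but the engine of the argument fails at its first load-bearing claim: the assertion that $\bar F(q)=\prod_{j\ge 0}(1-q^{ja+b})$ is lacunary with coefficients in $\{-1,0,1\}$ is false whenever $b<a$. Take $a=2$, $b=1$, so the coefficient $e(n)$ of $q^n$ in $\bar F$ is the number of partitions of $n$ into distinct odd parts counted with sign $(-1)^{\text{number of parts}}$. For $n=8$ these partitions are $7+1$ and $5+3$, both with two parts, so $e(8)=2$; for $n=9$ they are $9$ and $5+3+1$, so $e(9)=-2$; and checking small $n$ shows $e(n)\ne 0$ for every $n\le 9$ except $n=2$, so neither the $\pm 1$ bound nor the sparse support holds. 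The reason Franklin's involution does not adapt is structural: its ``move the smallest part onto the diagonal'' step changes each of the largest parts by $1$, which exits the progression $b\bmod a$; the analogue that respects the progression --- subtract $a$ from each part of the maximal initial run with common difference $a$ --- changes the weight rather than the number of parts, so it is not sign-reversing and yields no pentagonal-type theorem except in the degenerate case $a=b$, where the product is just a dilation of Euler's. Since the lacunarity, the $\pm1$ coefficients, and the ``single surviving term'' evaluations are exactly what drive your support-density comparison and the final contradiction $1\equiv 0\pmod d$, the proof does not survive this. In addition, your last paragraph concedes that the restriction of the hypothesis to the single residue class $A$ is an unresolved obstruction, so even granting lacunarity the argument is incomplete as written.

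It may help to know that the progression-respecting map just described is precisely what the paper uses, but applied to unsigned counts rather than to a signed product: subtracting $a$ from each of the first $j$ parts gives a bijection from the partitions in $D^{j}_{a,b}(n)$ whose first $j$ parts form a maximal run of common difference $a$ onto $D^{j}_{a,b}(n-aj)$, whence $|D^{j}_{a,b}(n-aj)|\not\equiv|D^{j}_{a,b}(n)|\pmod d$ whenever $|D^{j+1}_{a,b}(n)|\not\equiv 0\pmod d$. Seeding this at a staircase partition (your generalized pentagonal value $n_1=\sum_{i=0}^{k-1}(ai+b)$, where $|D^{k}_{a,b}(n_1)|=1$) and descending in $j$ produces arbitrarily large $n$ with $|D_{a,b}(n-a)|\not\equiv|D_{a,b}(n)|\pmod d$ and $n\equiv n-a\equiv bc\pmod a$, which proves the theorem while sidestepping both the lacunarity problem and the residue-class bookkeeping entirely.
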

\begin{proof}
To show at least two congruence classes modulo $d$ are hit by $|D_{a,b}(n)|$ for infinitely many $n\in A$, it suffices to show that for every $m$ there exists $n\in A$ satisfying $n\geq m$ and $|D_{a,b}(n-a)|\not\equiv |D_{a,b}(n)|\mod d$.  Set $D^1_{a,b}(n)=D_{a,b}(n)$, and for $j>1$ set $D^{j}_{a,b}(n)=\{\lambda\in D_{a,b}(n):\lambda_1-\lambda_2=\ldots=\lambda_{j-1}-\lambda_{j}=a\}$ (partitions in $D^{j}_{a,b}(n)$ have $j$ parts or more). 

Note that $D^{j+1}_{a,b}(n)\subseteq D^{j}_{a,b}(n)$ for $j\geq 1$.  Since all parts of partitions in $D^{j}_{a,b}(n)$ lie in the same congruence class modulo $a$, a partition $\lambda\in D^{j}_{a,b}$ fails to be in $D^{j+1}_{a,b}(n)$ when $\lambda_{j+1}$ does not exist or $\lambda_{j}-\lambda_{j+1}=ta$ with $t>1$.  If $D^{j+1}_{a,b}(n)\neq\emptyset$, then $n\geq\sum_{i=0}^{j}(ai+b)$, so every partition $\lambda\in D^{j}_{a,b}(n)$ satisfies $\lambda_{j}\geq a+b$ since otherwise $n=\sum_{i=0}^{j-1}(ai+b)$.  

For $j\geq 1$ and $D^{j+1}_{a,b}(n)\neq\emptyset$, there exists a bijection $\phi^{j}_n:(D^{j}_{a,b}(n)-D^{j+1}_{a,b}(n))\rightarrow D^{j}_{a,b}(n-aj)$ defined by $(\phi^{j}_n(\lambda))_i=\lambda_i-a$ for $1\leq i\leq j$ and $(\phi^{j}_n(\lambda))_i=\lambda_i$ for $i>j$.  Thus $|D^{j}_{a,b}(n-aj)|\not\equiv |D^{j}_{a,b}(n)|\mod d$ if $|D^{j+1}_{a,b}(n)|\not\equiv 0\mod d$.

Let $k=am+c$ and $n_1=\sum_{i=0}^{k-1}(ai+b)$, so $n_{1}\equiv bc\mod a$.  Consider the partition $\lambda$ of $n_1$ with $k$ parts given by $\lambda_i=a(k-i)+b$; hence $|D^k_{a,b}(n_1)|=1\not\equiv 0\mod d$ since clearly $\lambda$ is the only partition in $D^k_{a,b}(n_1)$.  This yields $|D^{k-1}_{a,b}(n_{1}-a(k-1))|\not\equiv |D^{k-1}_{a,b}(n_{1})|\mod d$, so we can pick $n_{2}\in\{n_{1}-a(k-1),n_{1}\}$ to satisfy $|D^{k-1}_{a,b}(n_{2})|\not\equiv 0\mod d$.  Similarly, $|D^{k-2}_{a,b}(n_{2}-a(k-2))|\not\equiv |D^{k-2}_{a,b}(n_{2})|\mod d$, so we can pick $n_{3}\in\{n_{2}-a(k-2),n_{2}\}$ to satisfy $|D^{k-2}_{a,b}(n_{3})|\not\equiv 0\mod d$.  Iterate this process to compute the sequence $n_{1},n_{2},\ldots,n_{k-1}$.  

Putting everything together, we have $n_i\equiv n_{1}\equiv bc\mod a$ and $|D^{k-i}_{a,b}(n_i-a(k-i))|\not\equiv |D^{k-i}_{a,b}(n_i)|\mod d$ for $i<k$, with $n_{k-1}\geq n_{1}-\sum_{i=2}^{k-1}ai>\sum_{i=0}^{k-1}(ai+b-ai)=kb\geq m$.  Since $D_{a,b}(n)=D^1_{a,b}(n)$, setting $n=n_{k-1}$ completes the proof.
\end{proof}
The \emph{Ferrers diagram} of a partition $\lambda$ is a pattern of upper left-justified dots, with $\lambda _i$ dots in the $i$th row from the top.  The \emph{conjugate partition} of $\lambda$ is the partition whose Ferrers diagram has $\lambda _i$ dots in the $i$th column from the left.
\begin{cor}
Both $p(2n)$ and $p(2n+1)$ take each value of parity infinitely often.

\end{cor}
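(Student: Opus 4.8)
The plan is to reduce the parity of $p(n)$ to the quantity $|D_{2,1}(n)|$ that Theorem \ref{main} controls, and then invoke that theorem with $a=2$, $b=1$, $d=2$. The bridge is the congruence $p(n)\equiv |D_{2,1}(n)|\pmod 2$, which I would establish combinatorially using the two notions just introduced, the Ferrers diagram and conjugation, rather than through generating functions.

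First I would observe that conjugation $\lambda\mapsto\lambda'$ is an involution on the set of all partitions of $n$, so these partitions fall into two-element orbits $\{\lambda,\lambda'\}$ together with the fixed points, which are exactly the \emph{self-conjugate} partitions. Counting modulo $2$ annihilates the two-element orbits and leaves $p(n)\equiv (\text{number of self-conjugate partitions of }n)\pmod 2$. Next I would identify the self-conjugate partitions of $n$ with $D_{2,1}(n)$, the partitions of $n$ into distinct odd parts. Given a self-conjugate $\lambda$, its Ferrers diagram is symmetric across the main diagonal; slicing it into principal hooks, where the $i$th hook has its corner at the $i$th diagonal cell, produces for each $i$ a piece of size $2(\lambda_i-i)+1$, which is odd by symmetry, and these sizes strictly decrease in $i$ and so are distinct. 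Reading off the hook sizes sends $\lambda$ to a partition of $n$ into distinct odd parts, and folding distinct odd parts back into nested symmetric hooks inverts the map. Hence the number of self-conjugate partitions of $n$ equals $|D_{2,1}(n)|$, and we obtain $p(n)\equiv|D_{2,1}(n)|\pmod 2$.

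Finally I would specialize Theorem \ref{main} to $a=2$, $b=1$, $d=2$, noting that these satisfy its hypotheses $a\geq b\geq 1$ and $d\geq 2$. Since $0\le r<s<2$ forces $r=0$ and $s=1$, the theorem asserts that $|D_{2,1}(n)|$ is even for infinitely many $n\equiv bc\pmod 2$ and odd for infinitely many such $n$. Choosing $c=0$ gives the set $A$ of even integers, so by the bridge congruence $p(2n)$ takes each value of parity infinitely often; choosing $c=1$ gives the odd integers, so $p(2n+1)$ does as well.

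The only genuine content is the bridge congruence; once the involution-plus-hook argument is in place, the corollary is a direct reading of Theorem \ref{main}. I expect the main obstacle to be phrasing the principal-hook bijection precisely enough to confirm both that every hook has odd size $2(\lambda_i-i)+1$ and that consecutive hook sizes differ, so that the resulting parts are truly distinct and the map is a genuine bijection onto $D_{2,1}(n)$.
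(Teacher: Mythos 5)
Your proposal is correct and follows essentially the same route as the paper: the conjugation involution reduces $p(n)$ modulo $2$ to the count of self-conjugate partitions, the principal-hook (unfolding) bijection identifies these with $D_{2,1}(n)$, and Theorem \ref{main} is then applied with $(a,b,c,d)=(2,1,0,2)$ and $(2,1,1,2)$. The hook-size computation $2(\lambda_i-i)+1$ and the strict decrease of these sizes are exactly the details implicit in the paper's ``unfolds the Ferrers diagram about its axis of symmetry.''
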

\begin{proof}
Partition conjugation is an involution on the set of partitions of $n$ that fixes only the partitions whose Ferrers diagrams are symmetric about the diagonal from the upper left to lower right.  Thus $p(n)$ has the same parity as the number of self-conjugate partitions of $n$, and the set of such partitions is in one-to-one correspondence with the set of partitions of $n$ into distinct odd parts through the bijection that unfolds the Ferrers diagram of any self-conjugate partition about its axis of symmetry.  Thus $p(n)\equiv |D_{2,1}(n)|\mod 2$.  Applying Theorem \ref{main} twice with $(a,b,c,d)=(2,1,0,2)$ and $(a,b,c,d)=(2,1,1,2)$ yields both claims.
\end{proof}


\begin{thebibliography}{3}

\bibitem{FS}
Fabrykowski, J.; Subbarao, M.V.
\newblock Some new identities involving the partition function $p(n)$, in Number Theory, R.A. Mollin, ed., Walter de Gruyter, New York, 1990, 125--138.

\bibitem{HS}
Hirschhorn, M. D.; Subbarao, M. V.
\newblock On the parity of $p(n)$, Acta Arith. 50 (1988), no. 4, 355--356.

\bibitem{K}
Kolberg, O.
\newblock Note on the parity of the partition function, Math. Scand. 7 (1959), 377--378. 

\bibitem{N}
Newman, M.
\newblock Advanced Problem No. 4944, Amer. Math. Monthly 69 (1962), no. 2, 175. 

\bibitem{O}
Ono, K.
\newblock Parity of the partition function, Electron. Res. Announc. Amer. Math. Soc. 1 (1995), no. 1, 35--42.

\bibitem{R}
Robbins, N.
\newblock On the parity of the partition function, Fibonacci Quart. 42 (2004), no. 4, 368--369.

\bibitem{S}
Subbarao, M. V.
\newblock Some remarks on the partition function, Amer. Math. Monthly 73 (1966), 851--854.

\end{thebibliography}
\end{document}